\newtheorem{theorem}{Theorem}[section]
\newtheorem{example}[theorem]{Example} %[section]
\newtheorem{lemma}[theorem]{Lemma}
\newtheorem{proposition}[theorem]{Proposition}
\theoremstyle{definition}
\newtheorem{definition}[theorem]{Definition}%[theorem]
\theoremstyle{remark}
\newtheorem{remark}[theorem]{Remark}
\numberwithin{equation}{section}
\def\cZ{\mathcal Z}
\def\cY{\tilde{Y}}
\def\bP{\mathbf{P}}
\def\th{{\tt h}}
\def\PM{P_{\mathcal M}}
\def\cM{\mathcal M}
\begin{document}

\setcounter{page}{1} \title{Agler Interpolation Families of Kernels}

\author[M.~T.~Jury, G.~Knese,  and
S.~McCullough]{Michael Jury$^1$, Greg Knese$^2$, 
  and Scott McCullough$^3$}

\address{Department of Mathematics\\
  University of Florida\\
  Box 118105\\
  Gainesville, FL 32611-8105\\
  USA}

\email{mjury@math.ufl.edu}

\address{Department of Mathematics\\
  University of California, Irvine\\
  Irvine, CA 92697-3875}

\email{gknese@uci.edu}

\address{Department of Mathematics\\
  University of Florida\\
  Box 118105\\
  Gainesville, FL 32611-8105\\
  USA}

\email{sam@math.ufl.edu}

\subjclass[2000]{47A57(Primary), 47A20, 47L30  (Secondary)}

\keywords{}
 
\thanks{${}^1$Research supported by NSF grant DMS 0701268.  \quad ${}^2$ 
  Research initiated by first two authors at the Fields Institute.
  Travel to UF supported in part by UF. 
  \quad ${}^3$ Research supported by NSF grant DMS 0758306.} 

\date{\today}

\begin{abstract}
  An abstract Pick interpolation theorem for a family of 
  positive  semi-definite kernels on a set $X$ is formulated.
  The result complements those in \cite{Ag} and \cite{AMbook}
  and will subsequently be applied to Pick interpolation on 
  distinguished varieties \cite{JKM}. 
\end{abstract}

\maketitle

\section{Introduction}
  Let $s(z,w)$ denote Szeg\H{o}'s kernel; i.e.,
 \begin{equation*}
   s(z,w) = \frac{1}{1-z\overline{w}},
 \end{equation*}
  for complex numbers $z$ and $w$. The kernel $s$
  is the reproducing kernel for the Hardy space
  $H^2(\mathbb D)$ of functions analytic in the unit
  disc $\mathbb D=\{z\in\mathbb C: |z|<1\}$ with square summable power series. 
  Thus, an analytic function $f:\mathbb D\to \mathbb C$
  with power series expansion 
 \begin{equation*}
   f(z) =\sum_{n=0}^\infty f_n z^n
 \end{equation*}
  is, by definition, in $H^2(\mathbb D)$ if and only if
  $ \sum |f_n|^2$ converges. The Hardy space
  is a Hilbert space with inner product
 \begin{equation*}
  \langle f,g\rangle = \sum_{n=0}^\infty f_n \overline{g_n}.
 \end{equation*}
  Evidently, for a fixed $w$, the function $s_w(z)=s(z,w)$
  is in $H^2(\mathbb D)$ and earns the title of reproducing kernel
  because, for $f\in H^2(\mathbb D)$,
 \begin{equation*}
  f(w) = \langle f, s_w\rangle.
 \end{equation*}

     Szeg\H{o}'s kernel is indispensable to the statement of

 \begin{theorem}[Pick Interpolation]
   Let $n$ be a positive integer. Given points
   $w_1,\dots,w_n;v_1,\dots,v_n \in\mathbb D$,
   there exists an analytic function $f:\mathbb D\to\mathbb D$
   such that $f(w_j)=v_j$ if and only if Pick's matrix,
  \begin{equation*}
     \begin{pmatrix} (1-v_j \overline{v_\ell})s(w_j,w_\ell) \end{pmatrix}
  \end{equation*}
   is positive semi-definite.
 \end{theorem}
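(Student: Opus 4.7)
The proof divides into necessity and sufficiency, and the two halves have very different flavors. I will first set up the reproducing-kernel/multiplier dictionary, then handle necessity by a direct Hilbert space computation, and finally handle sufficiency by a lifting argument.

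For necessity I will exploit the defining property of $s$. Suppose $f:\mathbb{D}\to\overline{\mathbb{D}}$ is analytic with $f(w_j)=v_j$. The multiplication operator $M_f$ on $H^2(\mathbb{D})$ is a contraction, and a standard adjoint computation using $f(w)=\langle f,s_w\rangle$ gives $M_f^* s_w = \overline{f(w)} s_w$. Since $I - M_f M_f^* \ge 0$, testing this operator inequality against an arbitrary vector $\sum_{j} c_j s_{w_j}$ and expanding the inner products via $\langle s_{w_j}, s_{w_\ell}\rangle = s(w_\ell,w_j)$ will produce, after taking conjugates appropriately, the positivity of the Pick matrix. This direction should be essentially routine.

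For sufficiency, which is the substantial half, my plan is a ``lurking isometry'' / commutant lifting argument. Assume the Pick matrix is positive semi-definite and set $\mathcal{M} := \operatorname{span}\{s_{w_1},\dots,s_{w_n}\} \subset H^2(\mathbb{D})$. Since $\mathcal{M}^\perp = B H^2(\mathbb{D})$ for the finite Blaschke product $B$ vanishing at the $w_j$, the subspace $\mathcal{M}$ is $M_z^*$-invariant. Define $T$ on $\mathcal{M}$ by $T s_{w_j} = \overline{v_j} s_{w_j}$. The calculation dual to the one above shows that the Pick matrix being PSD is exactly the statement $\|Tx\|^2 \le \|x\|^2$ for all $x \in \mathcal{M}$, i.e.\ $\|T\|\le 1$. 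Moreover $T$ and $M_z^*|_{\mathcal{M}}$ are simultaneously diagonal in the $\{s_{w_j}\}$ basis (assuming the $w_j$ are distinct; the coincident case is handled by passing to derivatives), so they commute. Sarason's theorem then produces $f \in H^\infty(\mathbb{D})$ with $\|f\|_\infty \le 1$ and $M_f^*|_{\mathcal{M}} = T$; evaluating at $s_{w_j}$ yields $\overline{f(w_j)} s_{w_j} = \overline{v_j} s_{w_j}$, hence $f(w_j)=v_j$.

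The main obstacle is the sufficiency step, and specifically the appeal to Sarason's theorem (equivalently the commutant lifting theorem), which is the deep input that turns a finite-dimensional matrix positivity condition into the existence of a global analytic function on $\mathbb{D}$. The remaining verifications---the bookkeeping around reproducing kernels, the identification of $\mathcal{M}^\perp$ with $B H^2$, and the evaluation of $M_f^*$ on kernel functions---are standard, and an alternative to commutant lifting is a Schur-type recursive reduction on $n$ using disc automorphisms, which trades one deep input for a more combinatorial but equally classical construction.
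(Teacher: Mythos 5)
Your argument is correct in outline, but note first that the paper does not actually prove this statement: the classical Pick theorem appears in the introduction purely as motivation, and the route by which the paper's machinery would recover it is very different from yours. Your necessity direction (test $I-M_fM_f^*\ge 0$ against $\sum_j c_j s_{w_j}$ using $M_f^*s_w=\overline{f(w)}\,s_w$) is the standard computation and is fine. Your sufficiency direction is the classical Sarason route: positivity of the Pick matrix makes $T s_{w_j}=\overline{v_j}s_{w_j}$ a contraction on $\mathcal M=\operatorname{span}\{s_{w_j}\}=(BH^2)^\perp$, $T$ commutes with $M_z^*|_{\mathcal M}$ since both are diagonal in the kernel basis, and commutant lifting supplies $f\in H^\infty$ with $\|f\|_\infty\le 1$ and $M_f^*|_{\mathcal M}=T$. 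The paper, by contrast, would obtain the disc case by checking that $\mathcal K_n=\{I_n\otimes s\}$ is an Agler interpolation family (its Section on the disc and annulus does exactly this, using the M\"obius factorization of $s-s(\cdot,\lambda)s(\lambda,\cdot)/s(\lambda,\lambda)$) and then invoking the abstract Theorem \ref{thm:main}, whose proof replaces commutant lifting by operator-algebraic tools: the quotient $H^\infty(k)/\mathcal I$ is an operator algebra by BRS, its completely contractive representations dilate via Arveson--Stinespring, and a finite-intersection-property compactness argument produces the extension. Your approach is shorter and more concrete for the single Szeg\H{o} kernel; the paper's buys uniformity over a whole family of kernels (annulus, distinguished varieties), where no single kernel and no single model space suffice.

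Two small repairs you should make. First, Sarason's theorem only gives $f:\mathbb D\to\overline{\mathbb D}$; to get $f:\mathbb D\to\mathbb D$ as the theorem asserts, invoke the maximum principle: if $|f|$ attains the value $1$ then $f$ is a unimodular constant, contradicting $f(w_1)=v_1\in\mathbb D$. Second, the operator $T$ is only well defined when the $w_j$ are distinct (and the problem is only consistent then unless coincident nodes carry equal targets); you should either assume distinctness, as is implicit in the statement, or make the derivative/Jordan-block modification you allude to precise rather than waving at it.
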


  Extensions of the Pick interpolation theorem to domains and settings
  more general than the disc $\mathbb D$ often involve replacing
  the Szeg\H{o} kernel with a family of kernels.  The
  references 
 \cite{AMbook, AMint, RIEOT, R2, DPRS, Ab, B, Pi, CLW, Mint, JK, MP, Sarasonannulus} represent only fraction of the results
  in this direction.  For instance, in Abrahamse's \cite{Ab} interpolation
  theorem on the annulus the Szeg\H{o} kernel is replaced by
  a family of kernels $k^t(z,w)$ -  parameterized  by $t$ in  
  the unit circle $\mathbb T$ - 
  identified by Sarason \cite{Sarasonannulus}. See
  also \cite{AD}.  In a similar vein, the recent constrained Pick interpolation 
  results in \cite{DPRS} \cite{RIEOT}\cite{R2} are stated in terms
  of a family of kernels over the disc canonically  determined by the constraints.
  
  The main result of this paper, Theorem \ref{thm:main} below,
  is a Pick theorem formulated, 
  like the related results in 
  \cite{Ag} and \cite{AMbook}, purely in terms of a collection of kernels.
  The result here has a natural operator algebraic interpretation
  %, a view
  %expanded upon at the end of the paper and
  which is exploited in the proof
  by using the fact that the quotient of
  an operator algebra by a two sided ideal is again an operator algebra.
  (This is a corollary of the Blecher-Ruan-Sinclair (BRS) theorem.
  See \cite{Paulsen} for an exposition of the BRS theorem and
  the related topics of completely positive maps, Arveson's extension
  theorem, and Stinespring's representation theorem.)
  In forthcoming work \cite{JKM}, 
  Theorem \ref{thm:main} is applied to produce a Pick interpolation
  theorem on distinguished varieties 
  \cite{AMvarieties} \cite{AMint}.

  The statement of the main result requires the
  notion of a (positive semi-definite) matrix-valued kernel.
  Let $M_n$ denote the $n\times n$ matrices
  with complex entries. An $M_n$-valued  kernel on a set $X$ 
  is a function $k:X\times X\to M_n$ which is positive
  semi-definite in the sense that, for every finite subset
  $F\subset X$, the (block) matrix
 \begin{equation*}
    \begin{pmatrix} k(x,y) \end{pmatrix}_{x,y\in F}
 \end{equation*}
  is positive semi-definite.

\begin{definition}
 \label{def:kernel-family}
   Fix a set $X$ and a sequence $\mathcal K=(\mathcal K_n)$
   where each $\mathcal K_n$ is a set of $M_n$-valued kernels
   on $X$. 
%   of kernels on $X$ such that each $k\in\mathcal K_n$ is an $n\times n$ 
%   matrix valued kernel; i.e.,
%    $k:X\times X \to M_n$ is positive semi-definite.

   The collection $\mathcal K$ is an {\it Agler interpolation
   family of kernels} provided:
 \begin{itemize}
      \item[(i)] if $k_1 \in \mathcal K_{n_1}$ and $k_2\in\mathcal K_{n_2}$, 
       then  $k_1\oplus k_2 \in \mathcal K_{n_1+n_2}$;
   \item[(ii)] if $k\in\mathcal K_n$, $z\in X$, 
         $\gamma\in \mathbb C^n$, and $\gamma^* k(z,z)\gamma\neq 0$,  then
     there exists an $N$, a kernel  $\kappa \in\mathcal K_N$,
     and a function $G:X\to M_{n,N}$ such that     
    \begin{equation*}
      k^\prime(x,y)  :=  k(x,y)
          - \frac{k(x,z)\gamma \gamma^* k(z,y)}{\gamma^*k(z,z)\gamma} 
          = G(x) \kappa(x,y)G(y)^*;
    \end{equation*}
   \item[(iii)] for each finite $F\subset X$
    and for each  $f:F\to \mathbb C$,  there is a $\rho>0$ 
       such that, for each
       $k\in\mathcal K$,
    \begin{equation*}
      F\times F \ni \, \mapsto (\rho^2 -f(x)f(y)^*)k(x,y) 
    \end{equation*}
      is a positive semi-definite kernel on $F$; and
    \item[(iv)] for each $x\in X$ there is a $k\in\mathcal K$ 
      such that $k(x,x)$
      is nonzero. % (and positive semi-definite).
 \end{itemize}
\end{definition}

\iffalse
\begin{remark}\rm
  The intersection of interpolation families is not necessarily
  an interpolation family; however, the intersection of
  closed interpolation families is a closed interpolation family.
\end{remark}
\fi

\begin{remark}\rm
 \label{rem:restrict}
 Given $Y\subset X$ and a kernel
  $k:X\times X \to M_n$, let $k|_Y=k|_{Y\times Y}$.
  Thus $k|_Y$ is a kernel on $Y$. 
 If $\mathcal K$ is an Agler interpolation 
  family of kernels (on $X$), then  $\mathcal K_Y,$ the collection
 of kernels of the form $k|_Y$ for $k\in \mathcal K$,
 is an Agler interpolation family of kernels (on $Y$).
\end{remark}

  %(Worry about getting $k^\prime$ right??)

\begin{theorem}
 \label{thm:main}
  Suppose $\mathcal K$ is an Agler interpolation family of kernels on $X$.
  Further suppose $Y\subset X$ is finite,
  $g:Y\to \mathbb C$ and $\rho\ge 0$.
  If for each $k\in\mathcal K$ the kernel
 \begin{equation}
  \label{eq:thm-main-Y}
    Y\times Y \ni (x,y) \to (\rho^2 -g(x)g(y)^*)k(x,y) 
 \end{equation} 
  is positive semi-definite, then there exists $f:X\to \mathbb C$ 
  such that $f|_Y = g$ and for each $k\in\mathcal K$ the kernel
 \begin{equation}
  \label{eq:thm-main-X}
      X\times X \ni \to (\rho^2 -f(x)f(y)^*)k(x,y) 
 \end{equation}
      is positive semi-definite.
\end{theorem}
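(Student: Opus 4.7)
\noindent\emph{Proof plan.} We may assume $\rho > 0$, else axiom (iv) and the diagonal positivity $-|g(x)|^2 k(x,x) \succeq 0$ force $g \equiv 0$ and we extend by $f \equiv 0$; rescaling then reduces to $\rho = 1$. The diagonal condition $(1 - |f(x)|^2)k(x,x) \succeq 0$ combined with (iv) bounds $|f(x)| \le 1$ for any valid extension, so candidates live in $\overline{\mathbb D}^X$, compact in the product topology. For each finite $Z$ with $Y \subseteq Z \subseteq X$ let $E_Z \subseteq \overline{\mathbb D}^X$ be the closed set of extensions satisfying \eqref{eq:thm-main-X} on $Z$ for every $k \in \mathcal K$; since $E_{Z_1} \cap \cdots \cap E_{Z_m} \supseteq E_{Z_1 \cup \cdots \cup Z_m}$, the Tychonoff finite-intersection property for $\{E_Z\}$ follows once each individual $E_Z$ is nonempty. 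An induction on $|Z \setminus Y|$ reduces this to a one-point extension: given finite $Y$ and $z \in X \setminus Y$, produce $w \in \mathbb C$ so that setting $\tilde g|_Y = g$, $\tilde g(z) = w$, yields the positivity on $F := Y \cup \{z\}$ for every $k \in \mathcal K$.

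\noindent\emph{Operator-algebraic framework.} Turn $\mathbb C^F$ into an operator algebra $\mathcal A$ via the representation $\phi \mapsto \bigoplus M_\phi$ on the Hilbert space $\bigoplus_{n \ge 1}\bigoplus_{k \in \mathcal K_n} \mathcal H(k|_F)$, where $M_\phi$ denotes pointwise multiplication; axiom (iii) bounds this representation. The functions vanishing on $Y$ form a one-dimensional, closed, two-sided ideal $I \subset \mathcal A$ spanned by the indicator of $z$. By the Blecher--Ruan--Sinclair theorem $\mathcal A/I$ is again an operator algebra, and the restriction map identifies it algebraically with $\mathbb C^Y$. Write $\mathcal B$ for the analogous operator algebra built from $\mathcal K|_Y$ (cf.\ Remark~\ref{rem:restrict}); the hypothesis is $\|g\|_{\mathcal B} \le 1$, and the infimum description $\|g\|_{\mathcal A/I} = \inf\{\|\tilde g\|_{\mathcal A} : \tilde g|_Y = g\}$ shows that producing the required $w = \tilde g(z)$ is equivalent to the norm bound $\|g\|_{\mathcal A/I} \le 1$.

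\noindent\emph{Matching the two norms.} Because $\mathcal H(k|_Y) \hookrightarrow \mathcal H(k|_F)$ realises $\mathcal H(k|_Y)$ as a subspace co-invariant under every multiplier $M_\phi$ (that is, invariant under each $M_\phi^*$), compression gives the easy inequality $\|g\|_{\mathcal B} \le \|g\|_{\mathcal A/I}$. The reverse inequality $\|g\|_{\mathcal A/I} \le \|g\|_{\mathcal B}$ is the heart of the proof and is precisely where axiom (ii) plays its decisive role: a perturbation by an element of $I$ corresponds, at the kernel level, exactly to a Schur-complement perturbation at $z$ of the form appearing in (ii), and the factorisation $k' = G(x)\kappa(x,y) G(y)^*$ with $\kappa \in \mathcal K$ guaranteed there allows any representation of $\mathcal A/I$ witnessing the quotient norm to be expressed, up to amplification by the symbol $G$, as a representation built from multipliers on Hilbert spaces of the form $\mathcal H(\kappa|_Y)$. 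This identifies the quotient norm with $\|\cdot\|_{\mathcal B}$, so $\|g\|_{\mathcal A/I} \le 1$.

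\noindent\emph{Main obstacle.} The entire technical burden is the reverse inequality $\|g\|_{\mathcal A/I} \le \|g\|_{\mathcal B}$. BRS supplies only an abstract operator algebra structure on the quotient; one must convert that structure back into kernel-level data, and axiom (ii) is designed precisely to enable this conversion by encoding how the rank-one Schur complement at $z$ of a kernel in $\mathcal K$ factors through another kernel in the family. Once the two norms are matched, the infimum in the quotient norm produces an extension $\tilde g$ on $F$ with $\|\tilde g\|_{\mathcal A} \le 1$, yielding $w$ and the one-point extension; combined with the compactness argument of the first paragraph, this completes the proof.
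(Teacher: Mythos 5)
Your architecture matches the paper's: reduce to a finite (indeed one-point) extension via compactness, put an operator algebra structure on the functions, pass to the quotient by the ideal of functions vanishing on $Y$ using the BRS theorem, and show the quotient norm is controlled by the multiplier norm over the restricted family $\mathcal K|_Y$, with axiom (ii) entering through the Schur complement at the deleted point. You also correctly identify the reverse inequality $\|g\|_{\mathcal A/I}\le\|g\|_{\mathcal B}$ as the crux. But that crux is asserted rather than proved, and the missing step is not a routine verification --- it is the entire content of Lemmas \ref{lem:represent} and \ref{lem2}. BRS hands you an abstract completely isometric representation $\tau$ of the quotient on some Hilbert space; to ``convert that structure back into kernel-level data'' you must extend $\tau\circ\pi$ to a completely positive map on all of $\mathcal B(H^2(k))$ (Arveson), dilate it (Stinespring), and use finite-dimensionality of $H^2(k)$ to conclude the dilation is an ampliation $I\otimes M_k$. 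Only then do you have an isometry $V$ and concrete vectors $V\gamma$, $\bP Vh$ on which to run the kernel computation: locate the subspace $\mathcal G_{\cY}$ killed by the ideal, show $V\gamma$ lies in its orthocomplement $\mathcal M$, and identify the compressed kernel on $\mathcal M$ as $G\kappa G^*$ with $\kappa\in\mathcal K$. That last identification is itself not a single application of axiom (ii): even in the one-point case the subspace being removed, $Q_z\bP V\mathcal H$, can have dimension up to $N$, so one needs the inductive Lemma \ref{lem:compress-abstract}. None of this machinery appears in your proposal.

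There is also a structural obstacle created by your choice to build $\mathcal A$ from the single direct sum $\bigoplus_{k\in\mathcal K}H^2(k|_F)$. The dilation step above relies on the fact that every unital representation of $\mathcal B(\mathcal H)$ for \emph{finite-dimensional} $\mathcal H$ is a multiple of the identity; for the infinite-dimensional direct sum this fails (representations can factor through the Calkin algebra), so the Stinespring representation need not be an ampliation of your multiplication representation. The paper avoids this by fixing one $\tilde k\in\mathcal K$ at a time --- where $H^2(\tilde k)$ is finite-dimensional --- and then assembling the kernels with a separate finite-intersection-property argument over the compact sets $C_{\tilde k}$ (which also handles the non-attainment of the infimum in the quotient norm, another point you pass over when you say the infimum ``produces an extension'' of norm exactly $\le 1$). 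So: right plan, right role for axiom (ii), but the theorem's proof is precisely the part you have labelled the main obstacle and left unexecuted.
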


\begin{remark}\rm
 \label{rem:scalar-int}
   Theorem \ref{thm:main} is stated for scalar-valued interpolation; i.e.,
   the functions $f$ and $g$ take values in $\mathbb C$ as opposed
   to $M_n$.  In this case it suffices to consider a collection
   of scalar kernels canonically associated with $\mathcal K$ giving
   a result more in line with that found in \cite{Ag} and \cite{AMbook}.
   Some details are provided in Section \ref{sec:scalar}. 
\end{remark}

  In the forthcoming paper \cite{JKM}, Theorem \ref{thm:main} is
  applied to yield a Pick interpolation theorem for distinguished
  varieties. There are similarities to interpolation on multiply
  connected domains and the case of the annulus is discussed
  in Section \ref{sec:example}, where the role of item (ii) 
  of Definition \ref{def:kernel-family} becomes apparent.

\iffalse 
\begin{remark}\rm
 \label{rem:varities}
   In our forthcoming application to distinguished varieties the full
   force of condition (ii) of Agler interpolation family. Rather
   the kernel $k^\prime$ is already in the interpolation family. 
   This situation is analogous to that for interpolation 
   on multiply connected domains and the case of the annulus
   is discussed in Section \ref{sec:example}.
\end{remark}\rm
\fi

\section{Operator Theoretic Preliminaries}
  The operator theoretic approach to interpolation 
  associates to a positive semi-definite matrix-valued kernel $k$
  a Hilbert space $H^2(k)$. Functions satisfying, for this given
  $k$, the positivity condition of item (iii) of Definition
  \ref{def:kernel-family} determine bounded operators on $H^2(k)$.

 \subsection{The Hilbert Space $H^2(k)$}
  To a positive semi-definite kernel $k:X\times X\to M_n$, 
  there is associated a Hilbert space $H^2(k)$ so that
  in the case that $k$ is positive definite and 
  $X$ is finite,
  $H^2(k)$ is, as a set,  all functions $F:X\to \mathbb C^n$.
  To construct $H^2(k)$, define a semi-inner product
  on functions
  $F,G:X\to \mathbb C^n$ of the form
 \begin{equation*}
  \begin{split}
    F = & \sum_{x\in X} k(\cdot,x) F_x, \\
    G = & \sum_{x\in X} k(\cdot,x) G_x,
  \end{split}
 \end{equation*}
  %where $F_x,G_x\in\mathbb C^n,$ 
  by
 \begin{equation*}
  \langle F,G\rangle = \sum_{x,y\in X} \langle k(x,y)F_y,F_x\rangle.
 \end{equation*}
   Let $H^2(k)$ denote the Hilbert space obtained by quotienting
   out null vectors and then forming the completion
   of the resulting pre-Hilbert space.
   When $X$ is finite the
   quotient is finite dimensional and hence already complete. 
   If moreover, $k$ is positive definite, then the set
   of null vectors is trivial. 
 
  Condition (ii) in Definition 1.2 has a natural interpretation 
  in terms of $H^2(k)$:  if $\mathcal N$ is the 
  subspace of $H^2(k)$ spanned by the nonzero vector 
  $k(\cdot, z)\gamma$, then $k^\prime$ is the 
  reproducing kernel for $\mathcal N^\bot$.  Indeed, we have
 \begin{equation*}
    P_{\mathcal N} =  \frac{k(\cdot,z)\gamma (k(\cdot,z)\gamma)^*}
          {\langle k(\cdot,z)\gamma,k(\cdot,z)\gamma \rangle }.
 \end{equation*}
   Hence,
 \begin{equation*}
  \begin{split}
   \langle P_{\mathcal N} k(\cdot,y)v,k(\cdot,x)u \rangle 
     = & \frac{\langle k(\cdot,y)v,k(\cdot,z)\gamma\rangle 
         \langle k(\cdot,z)\gamma,k(\cdot,x)u\rangle }
            {\langle k(z,z)\gamma,\gamma \rangle } \\
     = &\frac{\langle k(z,y)v,\gamma \rangle \langle k(x,z)\gamma,u \rangle}
              {\langle k(z,z)\gamma,\gamma\rangle}  \\
     = & u^* \frac{k(x,z)\gamma\gamma^*k(z,y)}
         {\langle k(z,z)\gamma,\gamma \rangle} v.
  \end{split}
 \end{equation*}
  Thus, letting $\mathcal M = H^2(k)\ominus \mathcal N$
   and using the notation of item (iii) in Definition \ref{def:kernel-family},
 \begin{equation*}
  \begin{split}
   \langle \PM k(\cdot,y)v,&k(\cdot,x)u \rangle \\
     = & \langle k(\cdot,y)v,k(\cdot,x)u\rangle
        - \frac{\langle k(x,z)\gamma\gamma^*k(z,y)v,u\rangle}
         {\langle k(z,z)\gamma,\gamma \rangle} \\
     = &  \langle k(x,y)v,u\rangle 
            - \frac{\langle k(x,z)\gamma\gamma^*k(z,y)v,u\rangle} 
         {\langle k(z,z)\gamma,\gamma \rangle} \\
     = & \langle k^\prime(x,y) v,u\rangle.
  \end{split}
 \end{equation*}
   
  Assuming $k$ is a member of an Agler interpolation family
  $\mathcal K$, then, by item (iii) of
  Definition \ref{def:kernel-family}
  there is an $N$, a $\kappa\in\mathcal K_N$,
  and a  function $G:X\to M_{n,N}$ such that
 \[
  \langle \PM k(\cdot,y)v, k(\cdot,x)u \rangle
    = \langle G(x)\kappa(x,y)G(y)^*v,u\rangle.
 \]

 \begin{lemma}
  \label{lem:compress-abstract}
   Let $\mathcal K$ be an Agler interpolation family of
   kernels on a finite set $X$. Suppose $k\in\mathcal K,$ 
   $Z\subset X$ and for each $z\in Z$ there
   is an associated subspace $\mathcal J_z\subset \mathbb C^n.$
   Let $\mathcal G_z = k(\cdot,z)\mathcal J_z,$ 
   % For $z\in \mathcal Z$, let $\mathcal G_z =k(\cdot,z)\mathbb C^n$ and
   let $\mathcal N =\sum \mathcal G_z \subset H^2(k),$ 
   and let $\mathcal M = H^2(k)\ominus \mathcal N$.
   There is an $N$, a kernel $\kappa \in \mathcal K_N$,
   and a function $G:X\to M_{n,N}$ such that  
  \begin{equation}
   \label{eq:compress}
    \langle  \PM k(\cdot,y)v,k(\cdot,x)u \rangle  
        = \langle G(x) \kappa(x,y)G(y)^*u,v\rangle .  
  \end{equation}

   Moreover, there is a positive $M_n$-valued kernel $k^\prime$ 
   such that, for $v,w\in \mathbb C^n$, 
  \begin{equation}\label{E:moreover}
   \langle k^\prime(x,y)u,v\rangle = 
      \langle \PM k(\cdot,y)u,k(\cdot,x)v\rangle.
  \end{equation}

   Finally, the mapping $W:\mathcal M \to H^2(\kappa)$ defined by
  \[
    W \PM k(\cdot,y)u= \kappa(\cdot,y)G(y)^*u
  \]
   is (well defined and) an isometry.
 \end{lemma}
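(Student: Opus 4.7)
The plan is induction on $\dim \mathcal N$, peeling off one rank-one compression at a time via condition (ii) of Definition \ref{def:kernel-family}. The base case $\dim \mathcal N = 0$ is immediate: take $N = n$, $\kappa = k$, $G(x) = I_n$, and $W$ the identity, since $P_{\mathcal M}$ is then the identity on $H^2(k)$.

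For the inductive step, extract from the spanning set $\{k(\cdot, z)\gamma : z \in Z, \gamma \in \mathcal J_z\}$ a basis $e_1, \ldots, e_m$ of $\mathcal N$ with $e_i = k(\cdot, z_i) \gamma_i$, $z_i \in Z$, $\gamma_i \in \mathcal J_{z_i}$. Since $e_1 \neq 0$ we have $\gamma_1^* k(z_1, z_1) \gamma_1 \neq 0$, so condition (ii) produces $N_1$, $\kappa_1 \in \mathcal K_{N_1}$, and $G_1 : X \to M_{n, N_1}$ realizing the compression to $\mathcal N_1^\perp$, where $\mathcal N_1 = \operatorname{span}\{e_1\}$. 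Repeating the calculation displayed just above the lemma (but now for $\kappa_1$) shows that the map $W_1 : \mathcal N_1^\perp \to H^2(\kappa_1)$ defined by $W_1 P_{\mathcal N_1^\perp} k(\cdot, y) u = \kappa_1(\cdot, y) G_1(y)^* u$ is a well-defined isometry.

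Next, $\mathcal M = \mathcal N_1^\perp \ominus P_{\mathcal N_1^\perp} \mathcal N$, and $W_1$ sends $P_{\mathcal N_1^\perp} e_i$ to $\kappa_1(\cdot, z_i) G_1(z_i)^* \gamma_i$ for $i \ge 2$. A brief check shows these $m-1$ vectors are linearly independent: any relation among the $P_{\mathcal N_1^\perp} e_i$ pulls back to a relation among $e_1, \ldots, e_m$. Now invoke the inductive hypothesis on $\kappa_1 \in \mathcal K_{N_1}$ with subspaces $\mathcal J'_z = \operatorname{span}\{G_1(z_i)^* \gamma_i : z_i = z\} \subset \mathbb C^{N_1}$, obtaining $\kappa \in \mathcal K_N$, $\tilde G : X \to M_{N_1, N}$, and an isometry $W_2$ onto a subspace of $H^2(\kappa)$. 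Setting $G(x) = G_1(x) \tilde G(x)$ and $W = W_2 \circ W_1|_{\mathcal M}$ chains the two compression formulas to yield (\ref{eq:compress}). For the positive kernel in (\ref{E:moreover}), define $k^\prime(x,y) = G(x)\kappa(x,y)G(y)^*$; positivity is inherited from $\kappa$, and the inner-product identity follows from (\ref{eq:compress}) and the reproducing property. That $W$ is isometric is a direct inner-product check: both $\|W \sum P_{\mathcal M} k(\cdot, y_i) u_i\|^2$ and $\|\sum P_{\mathcal M} k(\cdot, y_i) u_i\|^2$ reduce to $\sum_{i,j} u_j^* k^\prime(y_j, y_i) u_i$.

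The main bookkeeping obstacle is ensuring that the inductive hypothesis genuinely applies at each stage with strictly smaller $\dim \mathcal N$. This requires (a) verifying that the vectors $\kappa_1(\cdot, z_i) G_1(z_i)^* \gamma_i$ remain linearly independent in $H^2(\kappa_1)$ — secured by the isometry of $W_1$ together with the independence of $\{P_{\mathcal N_1^\perp} e_i\}_{i \ge 2}$ — and (b) that $\kappa_1$ lies in $\mathcal K_{N_1}$ so the recursive call is legitimate, which is precisely the content of condition (ii). Everything else is a matter of carefully tracking which ambient Hilbert space each vector lives in and composing the isometries correctly.
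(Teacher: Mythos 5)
Your proposal is correct and is exactly the argument the paper intends: its one-line proof says equation \eqref{eq:compress} ``follows by an induction argument based on the computation preceding the proof,'' and your induction on $\dim\mathcal N$ --- peeling off one rank-one compression at a time via condition (ii), transporting the remaining spanning vectors through the isometry $W_1$, and composing the resulting maps --- is precisely that argument, carried out in full detail.
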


 \begin{proof}
   Equation \eqref{eq:compress} follows by an induction argument based
   on the computation preceding the proof.  The right hand side of
   equation (\ref{E:moreover}) determines a (positive semi-definite)
   kernel.  Finally, that $W$ is an isometry follows immediately
   from equation \eqref{eq:compress}.
 \end{proof}

\begin{lemma}
 \label{lem:Qx-defined}
   Suppose $X$ is a finite set and 
   $k:X\times X\to M_n$ is a (positive semi-definite) kernel.
   If for each $f:X\to \mathbb C$ there exists a $\rho>0$
   such that
 \begin{equation*}
   X\times X \ni (x,y) \to (\rho^2 -f(x)f(y)^*)k(x,y)
 \end{equation*}
   is positive semi-definite, then, for each $x\in X$ the
   mapping 
    \begin{equation*}
     Q_x k(\cdot,y)v = \begin{cases} k(\cdot,x)v & \ \ y=x; \\
                               0 & \ \ y\ne x.
      \end{cases}
  \end{equation*}
   determines a well defined mapping $Q_x:H^2(k)\to H^2(k)$.
\end{lemma}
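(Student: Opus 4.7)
The plan is to identify $Q_x$ with the adjoint of ``multiplication by $\chi_x$'' on $H^2(k)$, where $\chi_x:X\to\mathbb C$ is the indicator function of the singleton $\{x\}$, and then to extract well-definedness and boundedness of this operator directly from the hypothesis applied to the single scalar function $f=\chi_x$.

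Concretely, I would fix $x\in X$ and invoke the hypothesis with $f=\chi_x$ to obtain $\rho>0$ such that $(\rho^2-\chi_x(x')\chi_x(y)^*)k(x',y)$ is positive semi-definite on $X$. Evaluating the associated non-negative quadratic form on an arbitrary finite family $\{v_y\}_{y\in X}\subset\mathbb C^n$ and unpacking the definition of the $H^2(k)$ inner product will give
\begin{equation*}
0\le \rho^2\Big\|\sum_{y\in X}k(\cdot,y)v_y\Big\|^2 \;-\; \sum_{x',y\in X}\chi_x(x')\overline{\chi_x(y)}\langle k(x',y)v_y,v_{x'}\rangle.
\end{equation*}
Since only the term $x'=y=x$ survives in the second sum, and since $\langle k(x,x)v_x,v_x\rangle=\|k(\cdot,x)v_x\|^2$, this collapses to $\|k(\cdot,x)v_x\|^2\le\rho^2\|F\|^2$ for the finite sum $F=\sum_y k(\cdot,y)v_y$---which is precisely the estimate $\|Q_xF\|^2\le\rho^2\|F\|^2$ on the pre-Hilbert space of finite kernel sums.

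With this inequality, well-definedness falls out automatically: if a formal expression $F=\sum_y k(\cdot,y)v_y$ represents the zero element of $H^2(k)$, i.e.\ $\|F\|=0$, then $Q_xF$ also has norm zero, so $Q_x$ descends unambiguously to the quotient by null vectors, and the same estimate extends it by continuity to a bounded operator on the completion $H^2(k)$ of norm at most $\rho$. The one genuine obstacle is precisely this well-definedness point---a formal symbol $k(\cdot,y)v$ need not determine a unique element of $H^2(k)$ when $k$ is only semi-definite---but the PSD hypothesis, specialized to $f=\chi_x$, resolves it at a stroke by bounding the norm of the image in terms of the norm of the source.
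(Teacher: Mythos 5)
Your proposal is correct and uses the same key idea as the paper: specialize the hypothesis to $f=\chi_x$, the indicator of $\{x\}$, and read off from the resulting positive semi-definite quadratic form that $\langle k(x,x)v_x,v_x\rangle = \|Q_xF\|^2$ is controlled by $\|F\|^2$, which settles well-definedness when $F=0$. The only (harmless) difference is that you record the full operator bound $\|Q_xF\|\le\rho\|F\|$, whereas the paper only applies the inequality in the case $\|F\|=0$ since boundedness is automatic in finite dimensions.
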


\begin{proof}
%   That $Q_x$ is in fact well defined follows from 
%   item (iii) of Definition \ref{def:kernel-family}.
%   To verify this claim, let $f$ denote the indicator
%   function of $\{x\}$. Item (iii) postulates the
%  existence of a $\rho>0$ such that the block matrix
   Given $x$, let $f$ denote the indicator function 
   of the subset $\{x\}$ of $X$. By hypothesis, there exists
   $\rho>0$ such that 
 \begin{equation*}
    A=((\rho^2 - f(y)f(z)^*)K(y,z))_{y,z\in X}
 \end{equation*}
  is positive semi-definite. Consequently, 
  if $\sum_y k(\cdot,y)v_y =0,$ then, letting
  $v$ denote the vector with $y$-th entry $v_y$, 
 \begin{equation*}
   0\le \langle Av,v\rangle =\| \sum k(\cdot,y)v_y \|^2 - 
      \langle k(x,x)v_x,v_x\rangle \le 0,
 \end{equation*}
  from which it follows that $k(\cdot,x)v_x=0$. 
\end{proof}

\subsection{The algebra $H^\infty(k)$} 
 \label{subsec:H-infty-k}
  Let $k$ be a positive semi-definite $M_n$-valued kernel on $X$
  and suppose for each $f:X\to\mathbb C$ there
  is a $\rho>0$ such that
 \begin{equation*}
  X\times X \ni (x,y)\mapsto (\rho^2-f(x)f(y)^*)k(x,y)
 \end{equation*}
  is a positive semi-definite kernel on $X$.  
  Let $H^\infty(k)$ denote the set of functions $f:X\to \mathbb C$
  endowed with the norm,
 \begin{equation*}
   \|f\|_k =\inf\{\rho>0 : (\rho^2 - f(x)f(y)^*)k(x,y)\succeq 0 
        \mbox{ for all } k\in\mathcal K\}.
 \end{equation*}
   Here $\succeq 0$ means the relevant kernel is positive semi-definite.

  An  element $f$ of $H^\infty(k)$ is identified with the
  operator $M_f:H^2(k)\to H^2(k)$ whose adjoint is determined
  by $M_k(f)^* k(\cdot,z)h= f(z)^* k(\cdot,z)h$. Indeed,
 \begin{equation*}
  \|M_k(f)^*\|_k =\|f\|_k.
 \end{equation*}
   Hence $M_k:H^\infty(k)\to \mathcal B(H^2(k))$ defined by
   $f\mapsto M_k(f)$ is an isometric unital representation. 
   Moreover, viewing $H^\infty(k)$ as a subalgebra of
   $\mathcal B(H^2(k))$ determines an operator algebra
   structure on $H^\infty(k)$.
 
  \begin{lemma}
   \label{lem:represent}
     %Suppose $k$ is a positive definite kernel
     %on the finite set $X$ with values in $M_n$
    % so that $k:X\times X\to M_n$. 
     Suppose $X$ is finite.
     If $\mathcal H$ is a Hilbert space and 
     $\tau:H^\infty(k)\to \mathcal B (\mathcal H)$
     is a completely contractive unital representation, 
     then there is a Hilbert space $\mathcal E$ and an isometry 
     $V:\mathcal H\to \mathcal E \otimes H^2(k)$
     such that
   \begin{equation*}
      \tau(f)= V^* (I \otimes M_k(f)) V.
   \end{equation*}
  \end{lemma}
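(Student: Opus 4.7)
The plan is to obtain the dilation by the standard route through Arveson's extension theorem and Stinespring's representation theorem, exploiting that the hypothesis ``$X$ finite'' forces $H^2(k)$ to be finite dimensional. In particular $\mathcal{B}(H^2(k))$ is then $*$-isomorphic to a full matrix algebra $M_d$, and its only $*$-representations (up to unitary equivalence) are multiples of the identity representation. This rigidity is what ultimately produces a dilation space of the tensor form $\mathcal{E}\otimes H^2(k)$.

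First I would identify $H^\infty(k)$, via the isometric unital representation $M_k$, with a unital operator subspace of $\mathcal{B}(H^2(k))$. The hypothesis that $\tau$ is a unital complete contraction then gives, by Paulsen's extension result for unital operator spaces, that $\tau$ extends to a unital completely positive map on the operator system $M_k(H^\infty(k)) + M_k(H^\infty(k))^*$. Arveson's extension theorem in turn extends this UCP map to a UCP map $\tilde\tau : \mathcal{B}(H^2(k)) \to \mathcal{B}(\mathcal{H})$ satisfying $\tilde\tau(M_k(f)) = \tau(f)$ for every $f \in H^\infty(k)$.

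Next I would apply Stinespring's theorem to $\tilde\tau$, producing a Hilbert space $\mathcal{K}$, a $*$-representation $\pi : \mathcal{B}(H^2(k)) \to \mathcal{B}(\mathcal{K})$, and an isometry $V_0 : \mathcal{H} \to \mathcal{K}$ with $\tilde\tau(T) = V_0^* \pi(T) V_0$. Because $\mathcal{B}(H^2(k))$ is a finite-dimensional full matrix algebra, $\pi$ decomposes as a multiple of the identity representation: there is a Hilbert space $\mathcal{E}$ and a unitary identification $\mathcal{K} \cong \mathcal{E} \otimes H^2(k)$ under which $\pi(T) = I_\mathcal{E} \otimes T$. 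Transporting $V_0$ through this unitary yields an isometry $V : \mathcal{H} \to \mathcal{E}\otimes H^2(k)$, and specializing $T = M_k(f)$ gives the desired formula $\tau(f) = V^*(I_\mathcal{E} \otimes M_k(f))V$.

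The main subtlety is the opening step, passing from a completely contractive unital homomorphism on the (non self-adjoint) operator algebra $H^\infty(k)$ to a UCP extension on the enveloping $C^*$-algebra; this is precisely where the Paulsen/BRS machinery referenced in the introduction is doing work. The remainder of the argument (Arveson extension plus Stinespring plus the elementary representation theory of $M_d$) is essentially routine once that extension is in hand.
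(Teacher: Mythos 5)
Your argument is correct and follows essentially the same route as the paper's proof: identify $H^\infty(k)$ with $M_k(H^\infty(k))\subset\mathcal B(H^2(k))$, extend the complete contraction to all of $\mathcal B(H^2(k))$ (Arveson/Paulsen), apply Stinespring, and use finite-dimensionality of $H^2(k)$ to see that the Stinespring representation is a multiple of the identity, yielding $\mathcal L\cong\mathcal E\otimes H^2(k)$. The paper is merely terser at the extension step, leaving the Paulsen off-diagonal/UCP mechanism implicit where you spell it out.
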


 \begin{proof}
   Identify $H^\infty(k)$ with the subspace
   $\{M_k(f): f\in H^\infty(k)\}$ of  $\mathcal B(H^2(k))$.
   Since $\tau$ is completely contractive and unital, it
   extends to a completely contractive unital map 
   $\Phi:\mathcal B(H^2(k))\to \mathcal B(\mathcal H)$.
   By Stinespring's representation theorem, there exists
   a Hilbert space $\mathcal L,$ an isometry $V:\mathcal H\to \mathcal L$,
   and a representation $\pi:\mathcal B(H^2(k))\to \mathcal B(\mathcal L)$
    such that 
  \begin{equation*}
     \Phi(T)=V^*\pi(T)V.
  \end{equation*}
   In particular, for $f\in H^\infty(k)$, we have 
   $\tau(f)=V^* \pi(M_k(f)) V.$

   Since $H^2(k)$ is finite dimensional (as $X$ is finite), 
   $\pi$ is a multiple of
   the identity representation; i.e., up to unitary equivalence,
   $\pi(T)= I\otimes T$, and under this identification there
   is a Hilbert space $\mathcal E$ such that
   $\mathcal L=\mathcal E\otimes H^2(k)$. 
 \end{proof}

\section{The Proof for finite $X$}
  In this section we prove Theorem \ref{thm:main}
  first under the added hypothesis that $X$ is
  a finite set.  Accordingly, until Section
  \ref{subsec:infiniteX}, assume that $X$ is finite.

\subsection{Representations of quotients}
 \label{subsec:quotients}
 %  Suppose $k$ is a positive semi-definite
 %  kernel on $X$ for which $H^\infty(k)$ is
 %  defined. Fix a subset $Y$ of $X$ and
 %  let $I$ denote the ideal of functions
 %  in $H^\infty(k)$ which vanish on $Y$.
 %  Representations of the quotient
 %  $H^\infty(k)/I$ are naturally identified
 %  with representations of $H^\infty(k)$
 %  which annihilate $I$.
 %  Given a collection $\mathcal K$ of kernels on 
 %  a set $X$ and a subset $Y$ of $X$, let
 %  $\mathcal K|_Y$ denote set of restriction 
 %  $k|_{Y\times Y}=k|_Y$ for $k\in\mathcal K$.
   Given $f:X\to \mathbb C,$ let $\cZ(f)$ denote the zero
   set of $f$. The statement of the following lemma 
   also uses the notation $\mathcal K|_Y$ from
   Remark \ref{rem:restrict}

  \begin{lemma}
   \label{lem2}
   Suppose
 \begin{itemize}
    \item[(i)]  $\mathcal K$ is an Agler interpolation family on the 
             finite set $X;$
    \item[(ii)]  $k\in\mathcal K_n;$
    \item[(iii)]  $\mathcal H$ and $\mathcal E$ are Hilbert spaces, 
       and  $V:\mathcal H\to \mathcal E\otimes H^2(k)$
         is an isometry;
    \item[(iv)]   $\sigma:H^\infty(k)\to \mathcal B(\mathcal H)$ given by
      \begin{equation*}
         H^\infty(k) \ni f \mapsto V^* (I\otimes M_k(f)) V
      \end{equation*}
        is a (unital) representation; and
    \item[(v)]  $Y\subset X.$ 
  \end{itemize}

     If $\sigma(g)=0$ whenever $Y\subset \cZ(g),$ then, for
     each $\psi\in H^\infty(k),$  
   \begin{equation*}
     \|\sigma(\psi)^*\|\le \sup\{ \|M_{\kappa}(\psi|_{Y}) \| :
        \kappa \in \mathcal K|_Y \}.
   \end{equation*}
  \end{lemma}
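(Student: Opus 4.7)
The plan is to exploit the vanishing hypothesis on the ideal $J := \{\psi \in H^\infty(k) : \psi|_Y = 0\}$ together with the multiplicativity of $\sigma$ to rewrite $\sigma(\psi)$ in a form controlled by the action of $\psi|_Y$ on a kernel in $\mathcal K|_Y$, and then to upgrade this to an honest isometric Stinespring factorization against such a kernel.

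I would first decompose $H^2(k) = \mathcal N \oplus \mathcal M$, where $\mathcal N := \overline{\mathrm{span}}\{k(\cdot,y)v : y \in Y,\ v \in \mathbb C^n\}$. Since $\mathcal N$ is coinvariant under $M_k(\psi)$, the map $\iota : H^2(k|_Y) \to \mathcal N$ sending $k|_Y(\cdot,y)v \mapsto k(\cdot,y)v$ is an isometric isomorphism intertwining $M_{k|_Y}(\psi|_Y)^*$ with $M_k(\psi)^*|_\mathcal N$. Write $V = V_1 \oplus V_2$ accordingly ($V_i$ landing in $\mathcal E \otimes \mathcal N_i$ with $\mathcal N_1 = \mathcal N$, $\mathcal N_2 = \mathcal M$) and $M_k(\psi)$ in block lower-triangular form $\bigl(\begin{smallmatrix} A_\psi & 0 \\ C_\psi & D_\psi \end{smallmatrix}\bigr)$, with $A_\psi \cong M_{k|_Y}(\psi|_Y)$. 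Since $\chi_{X\setminus Y} \in J$, expanding $\sigma(\chi_{X\setminus Y}) = 0$ in blocks yields $V_2^*(I \otimes C_0) V_1 + V_2^* V_2 = 0$, where $C_0 := P_\mathcal M M_k(\chi_{X\setminus Y})|_\mathcal N$. Combining this with multiplicativity $\sigma(\chi_{X\setminus Y}\psi) = \sigma(\chi_{X\setminus Y})\sigma(\psi) = 0$, and using commutativity $M_k(\chi_{X\setminus Y}) M_k(\psi) = M_k(\psi) M_k(\chi_{X\setminus Y})$ (which forces $C_\psi = D_\psi C_0 - C_0 A_\psi$), one obtains by substitution
\[
\sigma(\psi) = \widetilde V^*(I \otimes A_\psi) V_1, \qquad \widetilde V := V_1 - (I \otimes C_0^*) V_2,
\]
together with the auxiliary identity $\widetilde V^* V_1 = I_\mathcal H$.

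To convert this ``section'' factorization into a genuine isometric dilation, I would apply Lemma \ref{lem:compress-abstract} to $k$ with $Z = Y$ and $\mathcal J_y = \mathbb C^n$, producing a kernel $\kappa_0 \in \mathcal K$ on $X$ together with an isometry $W : \mathcal M \to H^2(\kappa_0)$ carrying $M_k(\psi)|_\mathcal M$ to (the appropriate compression of) $M_{\kappa_0}(\psi)$. By axiom (i) of Definition \ref{def:kernel-family} the direct sum $k \oplus \kappa_0$ lies in $\mathcal K$, hence $\widetilde\kappa := (k \oplus \kappa_0)|_Y \in \mathcal K|_Y$. The aim is then to assemble $V_1$ (pushed to $H^2(k|_Y)$ via $\iota^{-1}$) with $V_2$ (pushed to $H^2(\kappa_0)$ via $W$) into an isometry $V' : \mathcal H \to \mathcal E \otimes H^2(\widetilde\kappa)$ implementing $\sigma$, so that $\|\sigma(\psi)^*\| = \|\sigma(\psi)\| \le \|M_{\widetilde\kappa}(\psi|_Y)\| \le \sup\{\|M_\kappa(\psi|_Y)\| : \kappa \in \mathcal K|_Y\}$.

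The principal obstacle is the assembly in this last step: the naive choice $V' := V_1 \oplus (I \otimes W)V_2$ fails to realize $\sigma$ on the nose, because the off-diagonal contribution $V_2^*(I \otimes C_\psi) V_1$ in the block expansion of $\sigma(\psi)$ is not captured. The relations $C_\psi = D_\psi C_0 - C_0 A_\psi$ and $\widetilde V^* V_1 = I_\mathcal H$ must be used to rearrange this cross-term and absorb it into the direct-sum structure of $\widetilde\kappa$, which is the delicate bookkeeping at the heart of the argument. Axiom (i) is essential here: no single kernel $k|_Y$ is large enough to host both $V_1$ and $V_2$ isometrically while correctly implementing $\sigma$, but enlarging to $(k \oplus \kappa_0)|_Y$ provides exactly the room needed.
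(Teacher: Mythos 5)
Your proposal takes a genuinely different route from the paper, and I believe the route contains both a substantial unfinished step and a wrong-way decomposition that cannot be repaired by ``bookkeeping.''

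The paper's proof does \emph{not} decompose $H^2(k)$ against the subspace $\mathcal N$ spanned by the $Y$-kernel vectors. Instead it fixes unit vectors $h,\gamma$ nearly attaining $\|\sigma(\psi)^*\|$, reduces to a finite-dimensional $\mathcal E_0\subset\mathcal E$ containing $V\gamma$ (possible because $X$ is finite), and works in $H^2(K)$ with $K=I_{\mathcal E_0}\otimes k\in\mathcal K$, using axiom (i). It then introduces the non-orthogonal idempotents $Q_x$ of Lemma \ref{lem:Qx-defined}, sets $\mathcal G_x:=Q_x\bP V\mathcal H$ (a subspace of the range of $K(\cdot,x)$ filtered through $V$), and takes $\mathcal M:=H^2(K)\ominus\sum_{z\notin Y}\mathcal G_z$. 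The hypothesis $\sigma(g)=0$ for $Y\subset\cZ(g)$ is used --- evaluated on indicator functions of singletons outside $Y$ --- to show $V\gamma\in\mathcal M$. Lemma \ref{lem:compress-abstract} is then applied with $Z=X\setminus Y$ and possibly proper $\mathcal J_z$, and the conclusion follows from a direct computation of $\langle\sigma(\psi)^*h,\gamma\rangle$; no multiplicativity of $\sigma$ and no isometric dilation is constructed.

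The obstruction to your version is the choice $Z=Y$, $\mathcal J_y=\mathbb C^n$, in Lemma \ref{lem:compress-abstract}. With that choice $\mathcal M=H^2(k)\ominus\mathcal N$ is exactly the orthogonal complement of the $Y$-kernel span, so $\PM k(\cdot,y)u=0$ for every $y\in Y$ and $u$. The isometry $W:\mathcal M\to H^2(\kappa_0)$ from the lemma satisfies $W\PM k(\cdot,y)u=\kappa_0(\cdot,y)G(y)^*u$, and these image vectors vanish for $y\in Y$; the nontrivial part of the range of $W$ is spanned by $\kappa_0(\cdot,z)G(z)^*u$ with $z\notin Y$, which bears no natural relation to $H^2(\kappa_0|_Y)$. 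Thus $\widetilde\kappa:=(k\oplus\kappa_0)|_Y$ is not a space into which $V_2$ can be sent via $W$ in any way that lands in the $Y$-restriction. You have complemented the wrong side: the useful compression lives on the orthocomplement of the $(X\setminus Y)$-data, not of the $Y$-data.

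Independently of this, your proposal stops at the decisive step. You state that the cross-term $V_2^*(I\otimes C_\psi)V_1$ ``must be rearranged and absorbed'' using the relations $C_\psi=D_\psi C_0-C_0A_\psi$ and $\widetilde V^*V_1=I$, and call this the ``delicate bookkeeping at the heart of the argument,'' but you do not carry it out. That rearrangement \emph{is} the content of the lemma; presenting it as a to-do item leaves the proof incomplete. Given the obstruction above I also do not see how the bookkeeping could close, since the proposed target $\mathcal E\otimes H^2(\widetilde\kappa)$ does not contain an isometric copy of your $\mathcal M$. To fix this you would need to compress to the $Y$-data inside $H^2(\kappa_0)$ and argue that this compression still computes $\sigma$ correctly --- which, once unwound, is the paper's actual argument with the roles of $Y$ and $X\setminus Y$ reinstated.
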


 \begin{remark}\rm
  \label{rem:lem2}
     Note  $\sigma(\psi)^*$ depends only upon $\psi|_Y$. In
     fact, $\sigma$ induces a representation 
     $\tilde{\sigma}:H^\infty(k)/I \to \mathcal B(\mathcal H)$, where
     $I$ is the ideal of functions in $H^\infty(k)$ which vanish
     on the complement, $\cY,$ of $Y$ in $X$. 
 \end{remark}

  \begin{proof}
  Fix $\psi\in H^\infty(k)$ and $\epsilon>0$. Choose unit vectors
  $h,\gamma$ in $\mathcal H$ such that
 \begin{equation}
  \label{lem2:eq3}
  \|\sigma(\psi)^*\| \le \langle \sigma(\psi)^* h,\gamma\rangle +\epsilon.
 \end{equation}

  Because $X$ is a finite set, there exists a finite dimensional
  subspace $\mathcal E_0$ of $\mathcal E$ such that
  $V\gamma \in \mathcal E_0 \otimes H^2(k)$. Let $K$
  denote the kernel $K:X\times X\to \mathcal B(\mathcal E_0) \otimes H^2(k)$
  defined by
 \begin{equation*}
   K(x,y) e\otimes v = e\otimes k(x,y)v.
 \end{equation*}
   Since $\mathcal K$ is closed with respect to direct
  sums, $K\in\mathcal K$.  Indeed, $K$ is the
  direct sum of $k$ with itself $m$ times, where
  $m$ is the finite dimension of $\mathcal E_0$.
  Let $N=mn$ and view $K:X\times X\to \mathbb C^N$.
  Summarizing,  $H^\infty(k)=H^\infty(K)$
  (as operator algebras), 
  $\mathcal E\otimes H^2(k)$ is canonically identified with
  $H^2(K) \oplus (\mathcal E_0^\perp \otimes H^2(k)),$
  and $V\gamma \in H^2(K)$.

  Let $\bP$ denote the projection onto $H^2(K)$. 
  Thus $\bP = P_{\mathcal E_0}\otimes I,$ from
  which it follows that 
  the subspace $H^2(K)$ reduces
  $(I_\mathcal E\otimes M_k(\varphi)^*)$ 
    for each $\varphi\in H^\infty(k).$
  Thus,
  for $\th \in \mathcal H$,  
 \begin{equation}
  \label{lem2:eq2}
   \begin{split}
     \langle \sigma(\varphi)^* \th , \gamma \rangle 
     = & \langle V^* (I_{\mathcal E}\otimes M_k(\varphi)^*) V\th,
                \gamma \rangle \\
     = & \langle \bP (I_{\mathcal E}\otimes M_k(\varphi)^*) V\th,
           V\gamma \rangle \\
     =&  \langle (I_{\mathcal E_0}\otimes M_k(\varphi)^*) \bP V\th,
           V\gamma \rangle \\
     = & \langle V^* M_K(\varphi)^* \bP V\th, \gamma\rangle,
  \end{split}
 \end{equation}
   where $V\gamma = \bP V\gamma$ was used in the second equality.

%    First suppose $\mathcal E$ is finite dimensional.  
%    Define the kernel $K:X\times X \to \mathcal E \otimes \mathbb C^n$
%   by 
% \begin{equation*}
%   K(x,y) e\otimes v = e\otimes k(x,y)v.
% \end{equation*}
  
    Because of item (iii) in the definition of interpolation family
   and Lemma \ref{lem:Qx-defined}, 
    for $x\in X$, 
  \begin{equation*}
     Q_x K(\cdot,y)v = \begin{cases} K(\cdot,x)v & \ \ y=x; \\
                               0 & \ \ y\ne x
      \end{cases}
  \end{equation*}
   determines a bounded operator $Q_x:H^2(K)\to H^2(K)$.

   Next observe $Q_x^2 =Q_x,$  the range of 
   $Q_x$ is $ [K(\cdot,x)v: v\in\mathbb C^N],$
   there is the (non-orthogonal) resolution  $I=\sum_x Q_x,$ and
  \begin{equation}
   \label{lem2:eq1}
      M_K(\varphi)^* Q_x =\varphi(x)^* Q_x
  \end{equation}
      for $\varphi \in H^\infty(k)$. 

    For $x\in X$, let
  \begin{equation*}
     \mathcal G_x = Q_x \bP V\mathcal H.
  \end{equation*}
     Observe $\mathcal G_x$ is invariant for  
    $\{M_K(\psi)^* : \psi\in H^\infty(K)\}$ because
    of equation \eqref{lem2:eq1}.
    Thus  $\mathcal G_{\cY}=\sum_{z\notin Y} \mathcal G_z$
    is 
   invariant for $\{M_K(\psi)^* : \psi\in H^\infty(k)\}.$
   Let $\mathcal M=H^2(K)\ominus \mathcal G_{\cY}$.

    If $g\in H^\infty(k)$ and  $Y\subset \cZ(g)$, 
    and if $\th \in \mathcal H$, then
  \begin{equation*}
   \begin{split}
    0 = & \langle  \sigma(g)^* \th, \gamma \rangle \\
      %= & \langle \sigma(g)^* V\th, V\gamma \rangle \\
      %= & \langle \bP \sigma(g)^* V \th, V\gamma \rangle \\
      = & \langle M_K(g)^* \bP V\th,V\gamma \rangle \\
      = & \langle \sum_x g(x)^* Q_x \bP V\th, V\gamma \rangle \\
      = & \langle \sum_{z\notin Y} g(z)^* Q_z \bP V\th, V\gamma \rangle. 
   \end{split}
  \end{equation*}
   The first equality follows from the hypothesis on  $\sigma$
   which gives $\sigma(g)=0$; the second uses
   equation \eqref{lem2:eq2}; 
   the third uses equation \eqref{lem2:eq1}
   and $I=\sum Q_x$; and  
   the fourth equality
   from the fact that $g(y)=0$ for $y\in Y$.
    Fix a $z_0\notin Y$ and use 
    item (iii) in the definition of interpolation
    family to choose $g\in H^\infty(k)$ such that $g(z_0)=1$ and
    $g(x)=0$ otherwise to obtain
  \begin{equation*}
     0 = \langle Q_{z_0} \bP V\th,V\gamma \rangle .
  \end{equation*}
     Thus, $ V\gamma$ is orthogonal to each
     $\mathcal G_{z_0}$ and therefore to $\mathcal G_{\cY}.$ 
    % Hence, the range of $V$ lies in $\mathcal M$.
     Hence $V\gamma \in \mathcal M.$

   Since $\PM Q_z \bP V\mathcal H=0$ for $z\notin Y$,
   if  $\th \in \mathcal H$ and $\bP V \th$ 
   is written as 
 \begin{equation*}
   \bP V\th = \sum_{y\in X} K(\cdot,y)v_y,
 \end{equation*}
  then, for $z\notin Y,$
 \begin{equation}
  \label{lem2:eq6}
    \PM K(\cdot,z)v_z =0.
 \end{equation}
 %  since $K(\cdot,z)v_z = Q_z \bP V \th$. 
  In particular,
 \begin{equation}
  \label{lem2:eq7}
   \PM \bP V\th = \sum_{y\in Y} \PM K(\cdot,y)v_y.
 \end{equation}
  Thus,
  with $\mathcal L$ equal to the 
  span of $\{K(\cdot,y)v: y\in Y, \ \ v\in \mathbb C^N\}$,
  it follows that $\PM \bP V \mathcal H\subset \mathcal L$.

  From Lemma \ref{lem:compress-abstract}
  there is an $M,$  a kernel $\kappa \in \mathcal K_M$,
  and a function $G:X\to M_{N,M}$ such that
 \begin{equation}
  \label{eq:K-prime-kappa}
    \langle \PM K(\cdot,y)v,K(\cdot,x)u\rangle
     =  \langle \kappa(x,y)G(y)^* v, G(x)^*u \rangle.
 \end{equation}
  In particular, the map
  $W:\mathcal L \to H^2(\kappa|_Y)$ defined by
  $W \PM K(\cdot,y)v = \kappa(\cdot,y)G(y)^* v$
  is (well defined and) an isometry.

    Returning to the  vector $h\in\mathcal H$
    in equation \eqref{lem2:eq3}, 
    there exists $h_x,\gamma_x \in \mathbb C^N$ such that
 \begin{equation*}
  \begin{split}
   \bP V h=& \sum_{x\in X} Q_x \bP Vh = \sum K(\cdot,x)h_x \\
   V \gamma = & \sum_{x\in X} Q_xV\gamma =  \sum K(\cdot,x) \gamma_x.
  \end{split}
 \end{equation*}
%   Let
% \begin{equation*}
%  \begin{split}
%    h^\prime = & \PM\bP Vh = \sum_{y\in Y} \PM K(\cdot,y)h_y \\
%    \gamma^\prime = & PV\gamma = V\gamma=
%           \sum_{y\in Y} \PM K(\cdot,y) \gamma_y.
%  \end{split}
% \end{equation*}
  Note that, since $h$ and $\gamma$ are unit vectors, that
  $\|\bP V h \|\le 1$ and $\|V \gamma  \|=1$.

  With these notations and for $\varphi\in H^\infty(k)$,  
 \begin{equation*}
  \begin{split}
   \langle \sigma(\varphi)^* h, \gamma \rangle 
   = & \langle M_K(\varphi)^* \bP Vh,V\gamma \rangle \\
   = & \sum_{y \in X} \langle \varphi(y)^* K(\cdot,y)h_y, 
       \PM V\gamma \rangle \\
   = & \sum_{x,y\in X} \langle \varphi(y)^* K(\cdot,y) h_y,
            \PM K(\cdot,x)\gamma_x \rangle \\
   = & \sum_{x,y\in X}  \langle \varphi(y)^* \PM K(\cdot,y)h_y,
               \PM K(\cdot,x)\gamma_x  \rangle \\
   = & \sum_{x,y \in Y} \langle \varphi(y)^* \PM K(\cdot,y)h_y, 
        \PM K(\cdot,x)\gamma_x  \rangle\\
   = & \sum_{x,y \in Y} \langle \varphi(y)^* W \PM K(\cdot,y)h_y, 
        W \PM K(\cdot,x)\gamma_x  \rangle\\
   = & \sum_{x,y\in Y} \langle \varphi(y)^* \kappa(\cdot,y)G(y)^* h_y,
         \kappa(\cdot,x)G(x)^* \gamma_x \rangle \\
   = & \langle M_{\kappa|_Y}(\varphi|_Y)^* \sum_{y\in Y}
       k(\cdot,y)G(y)^* h_y, \sum_{x\in Y} k(\cdot,x)G(x)^* \gamma_x\rangle\\
   = & \langle M_{\kappa|_Y}(\varphi|_Y)^* W \PM \bP V h, W V\gamma \rangle.
  \end{split}
 \end{equation*}
  Here the first equality follows from
  the definition of $\sigma$; the
  second uses equation \ref{lem2:eq1} and
  $V\gamma \in \mathcal M$;
  the fifth uses equation \eqref{lem2:eq6}; the sixth that
  $W:\mathcal L\to H^2(\kappa|_Y)$ is an isometry;  the seventh
  the definition of $W$; and finally the last equality uses
  both the definition of $W$ and equation \eqref{lem2:eq7}.

\iffalse %%%%%%%%%%%%%% 
 The equation \eqref{eq:K-prime-kappa} implies $W$ is 
  well defined and an isometry since,
 \begin{equation*}
  \begin{split}
  \langle W K^\prime(\cdot,x) v, WK^\prime(\cdot,y) w\rangle
    = & \langle \kappa(\cdot,x) G(x)v, \kappa(\cdot,y) G(y) w \rangle \\
    = & \langle G(y)^* \kappa(y,x)G(x)v,w \rangle \\
    = & \langle K^\prime(y,x)v,w \rangle \\
    = & \langle K^\prime(\cdot,x) v, K^\prime(\cdot,y) w \rangle.
  \end{split}
 \end{equation*}
   Moreover,
 \begin{equation*}
   W^* M_{\kappa|_Y}(\varphi|_Y)^* W
      =  M_{K^\prime|_Y}(\varphi|_Y)^*.
 \end{equation*}

  It now follows that
 \begin{equation}
  \label{eq:K-Kprime}
   \begin{split}
    \|\sigma(\varphi)^* \| -\epsilon 
      \le &  |\langle \sigma(\varphi)^* h, g\rangle | \\
      = & |\langle M_{K^\prime|Y}(\varphi|_Y)^* h^\prime,
             W \gamma^\prime \rangle_{H^2(K^\prime|_Y)} | \\
      \le & \| M_{K^\prime|Y}(\varphi|_Y)^* \| \|h^\prime \|\|\gamma\| \\
      \le &  \| M_{K^\prime|Y}(\varphi|_Y)^* \|.
    \end{split}
  \end{equation}
  Since $\epsilon>0$ is arbitrary,
 \begin{equation*}  
   \|\sigma(\psi)^*\| \le  \|M_{K^\prime|_Y}(\varphi|_Y)^*\|.
 \end{equation*}

\fi %%%%%%%%%%%%%%%%%%%%%%%%%%

   Hence,
 \begin{equation*}
  \begin{split}
    \|\sigma(\varphi)^* \| -\epsilon 
      \le &  |\langle \sigma(\varphi)^* h, g\rangle | \\
      = & |\langle M_{\kappa|_Y}(\varphi|_Y)^*  W\PM \bP V h, 
            V\gamma \rangle | \\
      \le & \| M_{\kappa|_Y}(\varphi|_Y)^* \| \ \|  W\PM \bP V h \| 
         \ \| W V\gamma \| \\
      \le & \| M_{\kappa|_Y}(\varphi|_Y)^* \| \  \|h\| \ \|\gamma \|.
    \end{split}
  \end{equation*}
   and the proof is complete.
\end{proof}

\subsection{The end of the  proof for finite $X$}
 \label{subsec:finiteX}
  In this subsection we complete the  proof of
  Theorem \ref{thm:main}
  in the case that $X$ is finite, in which case 
  there exists $m$ and $x_1,\dots,x_m$ such that
  $Y=X\setminus \{x_1,\dots,x_m\}.$
  Fix $g:Y\to\mathbb C$.  Suppose $\psi:X\to \mathbb C$ 
  extends $g$ so that $g=\psi|_Y$,   
  and define,
 \begin{equation*}
   \rho = \sup\{ \|M_{k|_Y}(\psi|_Y)\|:
        k\in \mathcal K \}.
 \end{equation*}
  Note that $\rho$ depends only upon $g$.

  Let $\tilde{k}$ be a given element of $\mathcal K$.  Let $\mathcal
  I_{\tilde k}$ denote the ideal of functions in $H^\infty(\tilde{k})$
  which vanish $Y$.  The quotient $H^\infty(\tilde{k})/\mathcal
  I_{\tilde{k}}$ is a unital operator algebra and hence (by the BRS
  theorem) it has a completely contractive unital representation
  $\tau$ on a Hilbert space $\mathcal H$.

  The quotient mapping
 \begin{equation*}
     \pi:H^\infty(\tilde{k})\to H^\infty(\tilde{k})/I_{\tilde{k}}
 \end{equation*}
   is completely contractive and unital.  Thus, 
   $\sigma = \tau \circ \pi:H^\infty(\tilde{k}) \to \mathcal B(\mathcal H)$
  is a completely contractive unital representation. 
  Further, because $\tau$ is a (complete) isometry,
\[
   \|\pi(\psi)\| =\|\sigma(\psi)\|
\]
  for $\psi\in H^\infty(\tilde{k})$.  
  Since $\pi$ is a unital completely contractive  
  representation of $H^\infty(\tilde{k})$,
  $\pi$ has the form given in Lemma \ref{lem:represent}.
  Hence, Lemma \ref{lem2} applies to give
 \[
   \|\pi(\psi)\|\le \rho.
 \]
  
   Suppose now that $\rho^\prime>\rho$. Then, by the definition of
   the quotient norm, there exists a $\varphi$ such that 
   $\pi(\varphi)=\pi(\psi)$
   and so that
 \begin{equation}
  \label{eq:fip}
    X\times X \ni (x,y)\to  [(\rho^\prime)^2 -\varphi(x)\varphi(y)^*] 
       \tilde{k}(x,y) 
 \end{equation}
   is positive semi-definite.

   Consider the set 
 \begin{equation*}
      C_{\tilde{k}} = \{(\varphi(x_1),\dots, \varphi(x_m))
        : \pi(\varphi)=\pi(\psi)
         \mbox{ and equation \eqref{eq:fip} holds}\} \subset \mathbb C^m.
 \end{equation*}
   From above $C_{\tilde{k}}$ is nonempty. It is also closed,
   and item (iv) in the definition of interpolation family 
   implies it is bounded.
   Because $\mathcal K$ is closed with respect to direct sums, the collection 
   $\{C_{\tilde{k}}:\tilde{k} \in\mathcal K\}$ has the finite intersection
   property. Hence, there exists a $\varphi$ such that
   $\varphi|_Y = g$ and, for each $k\in\mathcal K$, the kernel
 \begin{equation}
  \label{rhoprime}
     X\times X \ni (x,y)\to  [(\rho^\prime)^2 -\varphi(x)\varphi(y)^*] 
        k(x,y) 
 \end{equation}
  is positive semi-definite.  

  To finish the proof, choose a sequence $\rho_\ell>\rho$ 
  converging to
  $\rho$. There exists $\varphi_\ell$ such that the kernel
  in equation \eqref{rhoprime}, 
  with $\varphi_\ell$ in place of $\varphi$
  and $\rho_\ell$ in place of $\rho^\prime$, is positive semi-definite.
  Because  $\varphi_\ell$ is uniformly bounded (again
  using item (iv) of the definition of interpolation family)
  it has a subsequence converging pointwise
  to some $f$ which then satisfies the conclusion of the Theorem
  \ref{thm:main}

\section{The case of arbitrary $X$}
 \label{subsec:infiniteX}
  The passage from finite $X$ to infinite $X$ involves 
  a Zorn's Lemma argument.

  Let $\mathcal K$ denote a given interpolation family
  on a set $X$. 
  Let $Y$, a  finite subset of $X$, $g:Y\to\mathbb C$
  and $\rho>0$  such that for each $k\in \mathcal K$ the
  kernel
 \begin{equation*}
   Y  \times Y \ni (x,y) \mapsto (\rho^2 - g(x)g(y)^*)k(x,y)
 \end{equation*}
  is positive semi-definite, be given.

  Consider the collection $\mathcal S$ of pairs $(U,f)$ where 
  $Y\subset U \subset X$,
  $f:U\to \mathbb C$, $f|_U=g$, and  for each $k\in \mathcal K$ the
  kernel
 \begin{equation*}
    U\times U \ni (x,y) \mapsto (\rho^2 - f(x)f(y)^*)k(x,y)
 \end{equation*}
  is positive semi-definite.
  
  Partially order $\mathcal S$ as follows. Say $(U,f)\le (W,h)$
  if $U\subset W$ and $h|_U=f$.  Suppose $\mathcal C=\{(U,f_U)\}$
  is a well ordered chain from $\mathcal S$.  To see that
  $\mathcal C$ has an upper bound, let $T=\cup U$
  and define $h:T\to\mathbb C$ by $h(x)=f_U(x)$, where
  $(U,f_U)$ is any element of $\mathcal C$ for which
  $x\in U$.  The fact that $\mathcal C$ is linearly
  ordered implies that $h$ is well defined. Further,
  if $F$ is any finite subset of $T$, then there
  exists a $(U,f_U)\in\mathcal C$ such that $F\subset U$
  and hence, for each $k\in\mathcal K$, the matrix
 \begin{equation*}
  \begin{split}
   A_{k,F}=&\begin{pmatrix} (\rho^2-f_U(x)f_U(y)^*)k(x,y) \end{pmatrix}_{x,y\in F}\\
     =& \begin{pmatrix}(\rho^2-h(x)h(x)^*)k(x,y) \end{pmatrix}
  \end{split}
 \end{equation*}
  is positive semi-definite. It follows that $(T,h)\in\mathcal S$
  and is an upper bound for $\mathcal C$. 

  By Zorn's Lemma, $\mathcal C$ has a maximal element $(W,h).$
  Suppose $W\ne X$.  In this case, there is a point $z\in X\setminus W$.
  Given a finite subset $F\subset Y$, let $G=F\cup\{z\}.$
  For each $u\in \mathbb C$, define a function $q:G\to \mathbb C$ 
  by declaring $q|_F =h|_F$ and $q(z)=u$.  
  Now define $C_F$ to be the set of 
   $u\in\mathbb C$ for which the kernel
 \begin{equation*}
   G\times G \mapsto (\rho^2 -g(x)g(y)^*)k(x,y)
 \end{equation*}
   is positive semidefinite for all $k\in K.$
   The set $C_F$ is nonempty by the finite case
  of Theorem \ref{thm:main} and is also closed.
  It is bounded by condition (iv) of Definition \ref{def:kernel-family}.
  Thus $C_F$ is compact.

  The collection $\{C_F:F\subset X, \ \ |F|<\infty\}$ has
  the finite intersection property and hence there is
  a $u_*$ such that 
 \begin{equation*}
   u_* \in \cap \{C_F:F \subset X, \ \ |F|<\infty\}.
 \end{equation*}
  Define $h_*:Y\cup \{z\} \to \mathbb C$ by $h_*|_Y=h$
  and $h_*(z)=u_*$. Then $(W\cup\{z\},h_*)\in\mathcal C$
  and is greater than $(W,h)$, a contradiction which
  completes the proof.

%%%%%%%%%%%%% begins at previous section

\section{Scalar Interpolation}
 \label{sec:scalar}
   Let $\mathcal K$ be an Agler interpolation family of
   kernels on a set $X$. Let $\mathcal K_*$ denote those
   scalar kernels $k$  on $X$ which have the form
 \begin{equation*}
  k(x,y) = G(y)^* K(x,y)G(x)
 \end{equation*}
  for some $N$, kernel $K\in\mathcal K_N$, and function
  $G:X\to \mathbb C^N$.
   The following lemma says, under the 
  the conditions of equations \ref{eq:thm-main-Y}
   and \ref{eq:thm-main-X}
   in the statement of Theorem \ref{thm:main}, that 
   $\mathcal K$ can be replaced by $\mathcal K_*$.

 \begin{lemma}
   If $Y$ is a subset of $X,$ $\rho>0$, and $f:Y\to \mathbb C$, then
   the kernel 
  \begin{equation*}
    Y\times Y \ni (x,y) \mapsto (\rho^2 - f(x)f(y)^*)k(x,y)
  \end{equation*} 
   is positive semi-definite for every $k\in\mathcal K$ if
   and only if the kernel
  \begin{equation*}
    Y\times Y \ni (x,y) \mapsto (\rho^2 - f(x)f(y)^*)k_*(x,y)
  \end{equation*} 
   is positive for every $k_*\in\mathcal K_*$. 
 \end{lemma}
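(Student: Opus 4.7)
The plan is to establish both directions of the equivalence by translating between a scalar quadratic form built from $k_* \in \mathcal K_*$ and a $\mathbb C^N$-valued quadratic form built from the underlying $K \in \mathcal K_N$, via the elementary substitution $v_x := c_x G(x)$ that absorbs scalar test weights into vector test weights.

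For the forward implication, I would take $k_* \in \mathcal K_*$ written as $k_*(x,y) = G(y)^* K(x,y) G(x)$ for some $K \in \mathcal K_N$ and $G : X \to \mathbb C^N$. Given a finite $F \subset Y$ and scalars $\{c_x\}_{x \in F}$, the identity
\[
\sum_{x,y \in F} \overline{c_x} c_y\,(\rho^2 - f(x)f(y)^*)\,k_*(x,y)
= \sum_{x,y \in F} v_x^*\bigl[(\rho^2 - f(x)f(y)^*)K(x,y)\bigr] v_y
\]
holds with $v_x := c_x G(x) \in \mathbb C^N$, after the scalar factors $\overline{c_x}, c_y, (\rho^2 - f(x)f(y)^*)$ are moved past the matrix-valued pieces and grouped with the appropriate copy of $G$. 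The right-hand side is non-negative by the assumed positivity of $(\rho^2 - f(x)f(y)^*)K(x,y)$ on $Y$ (the hypothesis applied to $K \in \mathcal K$).

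For the reverse implication, I would fix $K \in \mathcal K_N$ and aim to verify that $(\rho^2 - f(x)f(y)^*)K(x,y)$ is positive semi-definite on $Y$ as an $M_N$-valued kernel. For a given finite $F \subset Y$ and test vectors $\{v_x\}_{x \in F} \subset \mathbb C^N$, extend $v$ to a function $G : X \to \mathbb C^N$ by setting $G \equiv 0$ off $F$, and let $c_x$ be the indicator of $F$. The corresponding scalar kernel $k_*(x,y) := G(y)^* K(x,y) G(x)$ lies in $\mathcal K_*$, and applying the $\mathcal K_*$-hypothesis to this $k_*$ with the weights $\{c_x\}$ reproduces, via the same identity used above, the desired inequality $\sum_{x,y \in F} v_x^*(\rho^2 - f(x)f(y)^*)K(x,y) v_y \ge 0$.

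The only substantive bookkeeping issue is aligning the positions of $G(x)$ and $G(y)^*$ in the defining formula for $k_*$ with the substitution $v_x = c_x G(x)$; once this scalar-to-vector reduction is set up correctly, both directions collapse to the standard observation that a matrix-valued positive semi-definite kernel descends to a scalar positive semi-definite kernel under conjugation by a $\mathbb C^N$-valued section, and nothing deeper than this is required — in particular, no appeal to the more delicate ingredients (BRS, Stinespring, Lemma \ref{lem2}) used in the proof of Theorem \ref{thm:main}.
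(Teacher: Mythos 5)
Your proposal is correct and follows essentially the same route as the paper: the paper's proof is exactly this scalar-to-vector substitution, writing $\langle AH,H\rangle$ for the block matrix $A=((\rho^2-f(x)f(y)^*)k(x,y))$ as a scalar quadratic form against the constant weight $o\equiv 1$ for the kernel $H(\cdot)^*k(\cdot,\cdot)H(\cdot)\in\mathcal K_*$. The only difference is that the paper writes out just the nontrivial direction ($\mathcal K_*$-positivity implies $\mathcal K$-positivity) and leaves the converse as the standard observation you also record, so nothing is missing from your argument.
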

 
 \begin{proof}
    Fix $k\in\mathcal K$ and 
    a finite subset $F\subset X$ and consider the block matrix 
  \begin{equation*}
     A = ((\rho^2 - f(x)f(y)^*)k(x,y))_{x,y\in F}.
  \end{equation*}
    Thus $A$ is a matrix with $n\times n$ matrix entries. 
    Given a function $H:F\to \mathbb C^n$ viewed as a vector,
  \begin{equation*}
   \begin{split}
     \langle AH,H\rangle = & \sum \langle A_{x,y}H(y),H(x) \rangle \\
       = & \langle [(\rho^2-f(x)f(y)^*) H(y)^* k(x,y)H(y)]_{x,y\in F}
           o(y),o(x) \rangle,
   \end{split}
  \end{equation*}
    where $o:F\to\mathbb C$ is the constant function $o(x)=1$. 
    Hence, if 
 \begin{equation*}
    F\times F\ni (x,y)\mapsto (\rho^2-f(x)f(y)^*) H(y)^* k(x,y)H(y)
 \end{equation*}
     is positive semi-definite for each $H$, then $A$ is positive
   semi-definite.
 \end{proof}

\section{Examples: the disc and the annulus}
 \label{sec:example}
  For the case of the disc, let $\mathcal K_n=\{s_n=I_n\otimes s\},$
  where $I_n$ is the identity $n\times n$ matrix and $s$
  is Szeg\H{o}'s kernel. Given a unit vector $\gamma\in\mathbb C^n$ 
  and $\lambda\in\mathbb D$ let $Q=I-\gamma\gamma^*,$ and let
  $\varphi_\lambda$ denote a M\"obius map of the disc
  sending $\lambda$ to $0$, and $G=\varphi_\lambda \gamma\gamma^*+Q$.
  It is readily verified that
 \begin{equation*}
  \begin{split}
    k^\prime(z,w) = & s_n(z,w)-\frac{s_n(z,\lambda)\gamma\gamma^* s_n(\lambda,w)}
      {\gamma^* s_n(\lambda,\lambda)\gamma} \\
     = & G(w)^* s_n(z,w)G(z).
  \end{split}
 \end{equation*}
  Hence $\mathcal K$ is an Agler interpolation family.

  Let $\mathbb A$ denote an annulus, $\{r<|z|<\frac{1}{r}\}$.
  There is a family $k_t(z,w)$ of scalar kernels parameterized
  by $T$ in the unit circle $\mathbb T$ which collectively play
  a role on the annulus similar to that played by  Szeg\H{o}'s
  kernel on the disc \cite{Sarasonannulus}.  These are
  the kernels appearing in Abrahamse's interpolation
  theorem on $\mathbb A$ \cite{Ab}. It turns out that
  given $t\in \mathbb T$ and $\lambda\in\mathbb A$ there is an $s\in\mathbb T$
  (which can be explicitly described in terms of the Abel-Jacobi
  map) and an analytic function $\varphi_\lambda$ such that
 \begin{equation*}
  k_t(z,w) - \frac{k_t(z,\lambda)k_t(\lambda,w)}{k_t(\lambda,\lambda)}
   =\varphi_\lambda(w)^* k_s(z,w)\varphi_\lambda(z).
 \end{equation*} 
   Moreover, to each $t$ and $s$ there is a $\lambda$ such that
   the above identity holds, explaining, at least heuristically,
   the need to consider
   the whole Sarason collection of kernels when interpolating
   on $\mathbb A$.

  Let $\mathcal K_n$ denote the collection of kernels
  of the form $k_{t_1}\oplus\dots \oplus k_{t_n}$. The results
  in \cite{AD} show that $\mathcal K=(\mathcal K_n)$
  is an Agler interpolation family on $\mathbb A$. Moreover,
  interpolation with respect to this family is 
  interpolation in $H^\infty(\mathbb A)$ as in \cite{Ab}.

  As a final remark, note that in the proof of Lemma \ref{lem2}
  and using the notations there 
  if $k$ is a direct sum of kernels and if 
  $\mathcal G_{\tilde{Y}} = \mathcal L,$
  then $\kappa$ is also the direct sum of
  scalar kernels.  If this were always the case, then
  there would be no need to consider direct sums 
  in the definition of interpolation family. 
  Thus, the fact that, for scalar interpolation on a multiply
  connected domain it suffices to consider scalar kernels
  only represents additional structure not modeled by
  Theorem \ref{thm:main}.

\iffalse %%%%%%%% 
  The operators
  of multiplication by $z$ on $H^2(k_t)$ are (rank one)
  bundle shifts over $\mathbb A$; i.e., they are (cyclic) pure
  subnormal operators whose minimal normal extension
  has spectrum in the boundary of $\mathbb A$ \cite{AD}.
  
  Let $\mathcal K_n$ denote the collection of $M_n$-valued kernels $k$
  on $\mathbb A$ such that multiplication by $z$ on $H^2(k)$
  is a bundle shift. If $k\in\mathcal K$ and $k^\prime$
  is a compression of $k$ as in condition (ii) of the definition
  of interpolation family, then multiplication by $z$ on
  $H^2(k^\prime)$ is also a bundle shift and thus $k^\prime \in\mathcal K_n$.
  It follows that $\mathcal K$ is an interpolation family.

  Further, by a result in \cite{AD},
  if $k\in\mathcal K_n$, then, up to unitary equivalence
  the bundle shift determined by $H^2(k)$, is a direct
  sum of rank one bundle shifts and thus it suffices
  to very the hypothesis of Theorem \ref{thm:main} for
  scalar kernels in $\mathcal K$; i.e., only of $\mathcal K_1$.
 \fi %%%%%%%%%%%%

\end{document}